\newcounter{num}[section]
\newenvironment{theorem}
{\refstepcounter{num}%
\bigskip\noindent\nopagebreak[4]{\bf Theorem~\arabic{section}.\arabic{num}. }\it}
\newenvironment{proposition}
{\refstepcounter{num}%
\bigskip\noindent\nopagebreak[4]{\bf Proposition~\arabic{section}.\arabic{num}. }\it}
\newenvironment{lemma}
{\refstepcounter{num}%
\bigskip\noindent\nopagebreak[4]{\bf Lemma~\arabic{section}.\arabic{num}. }\it}
\newenvironment{remark}
{\refstepcounter{num}%
\bigskip\noindent\nopagebreak[4]{\bf Remark~\arabic{section}.\arabic{num}. }}
\newcommand{\Ss}{{\mathbf{S}}}
\newcommand{\V}{{\mathrm{V}}}
\newcommand{\Var}{{\mathrm{Var}}}
\newcommand{\Irr}{{\mathrm{Irr}}}
\newcommand{\lb}{{\langle}}
\newcommand{\rb}{{\rangle}}
\newcommand{\pr}{{\prime}}
\newcommand{\s}{{\sigma}}
\newcommand{\stir}[2]{\genfrac{\{}{\}}{0pt}{}{#1}{#2}}
\begin{document}
\title{On irreducible algebraic sets over linearly ordered semilattices}
\author{Artem N. Shevlyakov}

\maketitle

\abstract{Equations over linearly ordered semilattices are studied. For any equation $t(X)=s(X)$ we find irreducible components of its solution set and compute the average number of irreducible components of all equations in $n$ variables.}
\section{Introduction}

This paper is devoted to the following problem. One can define a notion of an equation over a linearly ordered semilattice $L_l=\{a_1,a_2,\ldots,a_l\}$ (the formal definition of an equation is given below). A set $Y$ is {\it algebraic} if it is the solution set of some system of equations over $L_l$.  Let us consider an equation $t(X)=s(X)$ over $L_l$, and $Y$ be the solution set of $t(X)=s(X)$. One can find algebraic sets $Y_1,Y_2,\ldots,Y_m$ such that $Y=\bigcup_{i=1}^m Y_i$. One can decompose each $Y_i$ into a union of other algebraic sets, etc. This process terminates after a finite number of steps and gives a decomposition of $Y$ into a union of {\it irreducible} algebraic sets $Y_i$ (the sets $Y_i$ are called the {\it irreducible components }of $Y$). Roughly speaking, irreducible algebraic sets are ``atoms'' which form any algebraic set. The size and the number of such ``atoms'' are important characteristics of the semilattices $L_l$, since there are connections between irreducible algebraic sets and universal theory of linearly ordered semilattices (see \cite{uni_Th_II}). Moreover, the number of irreducible components was involved in the estimation of lower bounds of algorithm complexity (see~\cite{ben-or} for more details).

In this paper (Section~\ref{sec:decomposition_properties}) we study the properties of the irreducible components of the solution set $Y$ of an equation $t(X)=s(X)$. Precisely, we prove that the union of irreducible algebraic sets $Y=\bigcup_{i=1}^m Y_i$ is redundant, i.e. the intersections $\bigcap_{i\in I}Y_i$ ($|I|<m$) consists of many points (Proposition~\ref{pr:redundant}). Moreover, for any equation $t(X)=s(X)$ in $n$ variables we count the number $m$ of irreducible components (see~(\ref{eq:Irr(k_1,k_2,n,l)})), and in Section~\ref{sec:average} we count the average number $\overline{\Irr}(n,l)$ of irreducible components of the solution sets of equations in $n$ variables.

\section{Main definitions}

Let $L_l=\{a_1,a_2,\ldots,a_l\}$ be the linearly ordered semilattice of $l$ elements and $a_1<a_2<\ldots <a_l$. The multiplication in $L_l$ is defined by  $a_i\cdot a_j=a_{\min(i,j)}$. Obviously, the linear order on $L_l$ can be expressed by the multiplication as follows
\[
a_i\leq a_j\Leftrightarrow a_ia_j=a_i.
\]
A {\it term} $t(X)$ in variables $X=\{x_1,x_2,\ldots,x_n\}$ is a commutative word in letters $x_i$.

Let $\Var(t)$ be the set of all variables occurring in a term $t(X)$.
Following~\cite{uni_Th_II}, an {\it equation} is an equality of terms $t(X)=s(X)$. Below we consider inequalities $t(X)\leq s(X)$ as equations, since $t(X)\leq s(X)$ is the short form of $t(X)s(X)=t(X)$. Notice that we consider equations as {\it ordered pairs} of terms, i.e. the expressions $t(X)=s(X)$, $s(X)=t(X)$ are {\it different} equations. Let $Eq(n)$ denote the set of all equations in $X=\{x_1,x_2,\ldots,x_n\}$ variables (we assume that each $t(X)=s(X)\in Eq(n)$ contains the occurrences of all variables $x_1,x_2,\ldots,x_n$). An equation $t(X)=s(X)\in Eq(n)$ is said to be a {\it $(k_1,k_2)$-equation} if $|\Var(t)\setminus\Var(s)|=k_1$ and $|\Var(s)\setminus\Var(t)|=k_2$. For example, $x_1x_2=x_1x_3x_4$ is a $(1,2)$-equation. Let $Eq(k_1,k_2,n)\subseteq Eq(n)$ be the set of all $(k_1,k_2)$-equations in $n$ variables. Obviously,
\begin{equation}
Eq(n)=\bigcup_{(k_1,k_2)\in K_n}Eq(k_1,k_2,n),
\label{eq:Eq(n)}
\end{equation}
where 
\[
K_n=\{(k_1,k_2)\mid k_1+k_2\leq n\}\setminus\{(0,n),(n,0)\}.
\]

Each equation $t(X)=s(X)\in Eq(k_1,k_2,n)$ is uniquely defined by $k_1$ variables in the left part and by $k_2$ other variables in the right part (the residuary $n-k_1-k_2$ variables should occur in both parts of the equation). Thus, 
\[
\#Eq(k_1,k_2,n)=\binom{n}{k_1}\binom{n-k_1}{k_2}.
\]
By~(\ref{eq:Eq(n)}), one can compute
\[
\#Eq(n)=3^n-2.
\]  

\begin{remark}
In this paper we consider only equations $t(X)=s(X)$ with $n>l$, i.e. the number of variables occurring in $t(X)=s(X)$ is more than the order of the semilattice $L_l$. The case $n\leq l$ needs the different technic and was announced in~\cite{malov}.
\end{remark}

\bigskip

A point $P\in L_l^n$ is a {\it solution} of an equation $t(X)=s(X)$ if $t(P),s(P)$ define the same element in the semilattice $L_l$. By the properties of linearly ordered semilattices, a point $P=(p_1,p_2,\ldots,p_n)$ is a solution of $t(X)=s(X)$ iff there exist variables $x_i\in\Var(t)$, $x_j\in\Var(s)$ such that $p_i=p_j$ and $p_i\leq p_k$ for all $1\leq k\leq n$.  The set of all solutions of an equation $t(X)=s(X)$ is denoted by $\V(t(X)=s(X))$.

An arbitrary set of equations is called a {\it system}. The set of all solutions $\V(\Ss)$ of a system $\Ss=\{t_i(X)=s_i(X)\mid i\in I\}$ is defined as $\bigcap_{i\in I}\V(t_i(X)=s_i(X))$. A set $Y\subseteq L_l^n$ is called {\it algebraic over }$L_l$ if there exists a system $\Ss$ in $n$ variables with $\V(\Ss)=Y$. An algebraic set $Y$ is {\it irreducible} if $Y$ is not a proper finite union of other algebraic sets.  

\begin{proposition}
Any algebraic set $Y$ over $L_l$ is a finite union of irreducible sets
\begin{equation}
Y=Y_1\cup Y_2\cup \ldots\cup Y_m,\quad Y_i\nsubseteq Y_j \mbox{ for all $i\neq j$},
\label{eq:union_Y_general}
\end{equation}
and this decomposition is unique up to a permutation of components.
\end{proposition}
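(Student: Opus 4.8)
The plan is to exploit the finiteness of $L_l^n$: since $L_l$ has $l$ elements, the ambient set $L_l^n$ is finite, so every algebraic set contains finitely many points and there can be no infinite strictly descending chain of algebraic sets. I would first establish the existence of a decomposition into irreducibles by induction on the number of points $|Y|$. If $Y$ is itself irreducible, the decomposition is trivial. Otherwise, by the definition of irreducibility, $Y$ is a proper finite union $Y=Z_1\cup\cdots\cup Z_k$ of algebraic sets with each $Z_i\subsetneq Y$; hence $|Z_i|<|Y|$, and the induction hypothesis expresses each $Z_i$ as a finite union of irreducible algebraic sets. Collecting these yields a decomposition of $Y$. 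To pass to the irredundant form (\ref{eq:union_Y_general}), I would then discard any component contained in another; repeating this finitely many times leaves a family with $Y_i\nsubseteq Y_j$ for $i\neq j$ whose union is still $Y$.

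For uniqueness, suppose $Y=\bigcup_{i=1}^m Y_i=\bigcup_{j=1}^{m'}Y_j'$ are two irredundant decompositions into irreducible algebraic sets. The key observation is that the intersection of two algebraic sets is again algebraic: if $Y_i=\V(\Ss_1)$ and $Y_j'=\V(\Ss_2)$, then $Y_i\cap Y_j'=\V(\Ss_1\cup\Ss_2)$. Fixing $i$, I would write
\[
Y_i=Y_i\cap Y=\bigcup_{j=1}^{m'}(Y_i\cap Y_j'),
\]
a finite union of algebraic sets. Since $Y_i$ is irreducible, this union cannot be proper, so $Y_i\cap Y_j'=Y_i$, i.e.\ $Y_i\subseteq Y_j'$, for some $j$. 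Applying the same argument to $Y_j'$ in the first decomposition gives $Y_j'\subseteq Y_k$ for some $k$, whence $Y_i\subseteq Y_j'\subseteq Y_k$. Irredundancy of the first family forces $k=i$, so all inclusions are equalities and $Y_i=Y_j'$. This assigns to each $Y_i$ a matching $Y_j'$, and the symmetric argument shows the correspondence is a bijection, so the decomposition is unique up to a permutation of components.

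I expect the genuinely delicate points to be bookkeeping rather than conceptual. The termination of the decomposition process, which in the general theory requires a Noetherianity hypothesis, is here handed to us for free by the finiteness of $L_l^n$, so the induction on $|Y|$ is unproblematic. The one structural fact the argument really leans on is closure of algebraic sets under finite intersection, which follows immediately from the definition of $\V(\Ss)$ as the intersection of the solution sets of single equations. Care is needed only in reading the definition of irreducibility correctly, namely that ``proper'' forces every listed component to be a strict subset, so that a union $Y_i=\bigcup_j(Y_i\cap Y_j')$ in which no term equals $Y_i$ genuinely contradicts irreducibility.
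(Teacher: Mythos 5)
Your proof is correct, but it proceeds along a genuinely different route from the paper's. The paper's own proof is essentially a citation: it recalls from \cite{uni_Th_II} that existence and uniqueness of the irredundant decomposition~(\ref{eq:union_Y_general}) hold over any \emph{equationally Noetherian} semilattice, and then verifies that $L_l$ is equationally Noetherian for the trivial reason that $Eq(n)$ is finite, so there simply are no infinite systems in $n$ variables. You instead prove both halves from first principles: existence by strong induction on the cardinality $|Y|$, which is available because the ambient space $L_l^n$ is finite, followed by discarding redundant components; uniqueness by the classical argument that algebraic sets are closed under intersection ($\V(\Ss_1)\cap\V(\Ss_2)=\V(\Ss_1\cup\Ss_2)$), so an irreducible $Y_i=\bigcup_j(Y_i\cap Y_j')$ must coincide with one of its terms, after which irredundancy turns the resulting inclusions into equalities and the matching into a bijection. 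Both arguments are sound, and your reading of ``proper finite union'' (every term a strict subset) is the intended one; it is exactly what makes your induction step and your uniqueness step go through. What your approach buys is self-containedness: no appeal to the machinery of \cite{uni_Th_II}, and the argument visibly uses nothing beyond finiteness of $L_l^n$ and intersection-closure, so it works verbatim over any finite algebraic structure. What the paper's approach buys is brevity and generality of framework: the cited theorem also covers infinite equationally Noetherian semilattices (the setting of the rest of the literature the paper plugs into), where induction on the number of points is unavailable and would have to be replaced by Noetherian induction on descending chains of algebraic sets.
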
 
\begin{proof}
A semilattice $S$ is {\it equationally Noetherian} if for any infinite system $\Ss$ in variables $X=\{x_1,x_2,\ldots,x_n\}$ there exists a finite subsystem $\Ss^\pr\subseteq \Ss$ with the same solution set. According to~\cite{uni_Th_II}, the decomposition~(\ref{eq:union_Y_general}) holds for any algebraic set $Y$ over an equationally Noetherian semilattice $S$. Thus, it is sufficient to prove that $L_l$ is equationally Noetherian.

However the condition $|Eq(n)|<\infty$ gives that there is not any infinite system over $L_l$. Thus, $L_l$ is equationally Noetherian.
\end{proof}

The subsets $Y_i$ from the union~(\ref{eq:union_Y_general}) are called the {\it irreducible components} of $Y$.

Let $Y$ be an algebraic set over $L_l$ defined by a system $\Ss(X)$. One can define an equivalence relation $\sim_Y$ over the set of all terms in variables $X$ as follows
\[
t(X)\sim_Y s(X)\Leftrightarrow t(P)=s(P) \mbox{ for any point $P\in Y$}.
\] 
The set of $\sim_Y$-equivalence classes is called {\it the coordinate semilattice of $Y$} and denoted by $\Gamma(Y)$ (see~\cite{uni_Th_II} for more details). The following statement describes the coordinate semilattices of irreducible algebraic sets.

\begin{proposition}
A set $Y$ is irreducible over $L_l$ iff $\Gamma(Y)$ is embedded into $L_l$ 
\label{pr:gamma_is_embedded_for_irr}
\end{proposition}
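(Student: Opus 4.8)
The plan is to analyze $Y$ and all of its algebraic subsets through the coordinate semilattice $\Gamma(Y)$ via the standard point–homomorphism duality, and then to read off (ir)reducibility from the existence of a single homomorphism $\Gamma(Y)\to L_l$ that separates all elements. Two preliminary facts drive everything. First, since every term evaluates on $L_l$ as the minimum of its variables, the value of a term depends only on its variable set; hence $\Gamma(Y)$ is a quotient of the semilattice of nonempty subsets of $\{x_1,\dots,x_n\}$ and is in particular \emph{finite} (at most $2^n-1$ elements). Second, for $P\in Y$ the rule $[t]\mapsto t(P)$ is a well-defined homomorphism $\pi_P\colon\Gamma(Y)\to L_l$ (well-defined exactly because $t\sim_Y s$ forces $t(P)=s(P)$); and conversely every homomorphism $\phi\colon\Gamma(Y)\to L_l$ equals $\pi_P$ for $P=(\phi([x_1]),\dots,\phi([x_n]))$, because each defining equation $t=s$ of $Y$ gives $[t]=[s]$ in $\Gamma(Y)$, whence $t(P)=\phi([t])=\phi([s])=s(P)$ and so $P\in Y$. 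Thus $P\mapsto\pi_P$ identifies $Y$ with $\Hom(\Gamma(Y),L_l)$, and distinct classes of $\Gamma(Y)$ are separated by some $\pi_P$.

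For the implication ``$\Gamma(Y)$ embeds into $L_l$ $\Rightarrow$ $Y$ irreducible'', I would start from an embedding $\iota\colon\Gamma(Y)\hookrightarrow L_l$. By the duality above, $\iota=\pi_{P_0}$ for some $P_0\in Y$, and $\pi_{P_0}$ is injective. If $Y$ were a proper finite union $Y=Z_1\cup\dots\cup Z_m$ with each $Z_i=\V(\Ss_i)$ a proper algebraic subset, then picking $Q\in Y\setminus Z_i$ yields an equation $t_i=s_i\in\Ss_i$ that holds on all of $Z_i$ but fails at $Q$, so $[t_i]\neq[s_i]$ in $\Gamma(Y)$. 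Injectivity of $\pi_{P_0}$ then gives $t_i(P_0)\neq s_i(P_0)$, so $P_0\notin Z_i$ for every $i$, contradicting $P_0\in Y=\bigcup_i Z_i$. Hence $Y$ admits no proper finite cover and is irreducible.

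For the converse I would argue contrapositively, using irreducibility to manufacture the separating point. For each pair of distinct classes $c\neq c'$ in the finite semilattice $\Gamma(Y)$, choose representing terms $t,s$ and set $Y_{c,c'}=\{P\in Y: t(P)=s(P)\}$; this is the solution set of the defining system of $Y$ augmented by the single equation $t=s$, hence algebraic, and it is \emph{proper} since $c\neq c'$ in $\Gamma(Y)$ means $t,s$ disagree at some point of $Y$. A point $P$ makes $\pi_P$ injective precisely when it avoids every $Y_{c,c'}$. If no such point existed, then $Y=\bigcup_{c\neq c'}Y_{c,c'}$ would be a proper finite union of algebraic sets, contradicting irreducibility; so some $P$ gives an injective $\pi_P\colon\Gamma(Y)\to L_l$, and its corestriction to the image is the required embedding (which, incidentally, exhibits $\Gamma(Y)$ as a chain of at most $l$ elements).

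The main obstacle is pinning down the duality correctly, above all the verification that an \emph{arbitrary} homomorphism $\Gamma(Y)\to L_l$ returns a point of $Y$ — this is where the hypothesis that the defining equations of $Y$ become identities in $\Gamma(Y)$ is indispensable — together with the finiteness of $\Gamma(Y)$. Once these are secured, both directions collapse to one observation: a homomorphism $\Gamma(Y)\to L_l$ is injective iff the corresponding point lies in no proper algebraic subset, and finiteness of $\Gamma(Y)$ is exactly what lets irreducibility, a statement about finite unions, be matched against separation of all pairs simultaneously.
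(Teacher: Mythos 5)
Your proof is correct, and it takes a genuinely different route from the paper's. The paper's own proof is essentially a citation: it invokes the result from the foundations paper of Daniyarova--Myasnikov--Remeslennikov that $Y$ is irreducible iff $\Gamma(Y)$ is \emph{discriminated} by $L_l$, and then observes that for a finite semilattice discrimination coincides with embeddability. You instead prove the equivalence from scratch via the point--homomorphism duality: well-definedness of the evaluation map $\pi_P$ for $P\in Y$, the crucial converse that \emph{every} homomorphism $\Gamma(Y)\to L_l$ arises from a point of $Y$ (because the defining equations of $Y$ become identities in $\Gamma(Y)$), and finiteness of $\Gamma(Y)$ (terms over a semilattice depend only on their variable sets). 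Granting these, one direction is a clean covering contradiction, and the other converts irreducibility into the existence of a point avoiding the finitely many proper equalizer sets $Y_{c,c'}$, i.e.\ a point whose evaluation homomorphism is injective; this is exactly where finiteness of $\Gamma(Y)$ is indispensable, and you flag that correctly. In effect you have re-proved, in this finite setting, precisely the discrimination criterion the paper cites: for a finite coordinate algebra, discrimination by $L_l$ amounts to separating all pairs of elements simultaneously by a single homomorphism, which your covering argument produces. What your approach buys is self-containedness and the explicit bijection between $Y$ and $\Hom(\Gamma(Y),L_l)$, which is of independent interest (for instance, it explains the count $|Y_\s|=\binom{2l-1}{l}$ in Proposition~\ref{pr:redundant}); what the paper's route buys is brevity and the fact that the cited criterion applies beyond finite semilattices, where discrimination and embedding need not coincide.
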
  
\begin{proof}
Following~\cite{uni_Th_II}, $\Gamma(Y)$ is discriminated by $L_l$ iff $Y$ is irreducible (see~\cite{uni_Th_II} for the definition of the discrimination). However for a finite semilattice $L_l$ the discrimination is equivalent to the embedding.
\end{proof}

There are different algebraic sets over $L_l$ with isomorphic coordinate semilattices. Such sets are called {\it isomorphic}. For example, the following sets
\[
Y_1=\V(\{x_1\leq x_2\leq x_3\}),\; Y_2=\V(\{x_3\leq x_2\leq x_1\})
\]
has the isomorphic coordinate semilattices
\[
\Gamma(Y_1)=\lb x_1,x_2,x_3\mid x_1\leq x_2\leq x_3\rb\cong L_3,
\]
\[
\Gamma(Y_2)=\lb x_1,x_2,x_3\mid x_3\leq x_2\leq x_1\rb\cong L_3.
\]
Thus, $Y_1,Y_2$ are isomorphic.

\newpage
\section{Example}

Let $n=3$, $l=2$. We have exactly $Eq(3)=3^3-2=25$ equations in three variables over $L_2$. The following table contains the  information about such equations over $L_2$. The second column contains systems which define irreducible components of the solution set of an equation in the first column. A cell of the table contains $\uparrow$ if an information in this cell is similar to the cell above.

\begin{tabular}{|c|c|c|}
\hline
Equations&Irreducible components (IC)&Number of IC\\
\hline
$x_1x_2x_3=x_1x_2x_3$&$x_1\leq x_2=x_3 \cup x_1=x_2\leq x_3\cup$ &$6$\\
&$x_2\leq x_1=x_3 \cup x_3\leq x_1=x_2\cup$&\\
&$x_1=x_3\leq x_2 \cup x_2=x_3\leq x_1$&\\
\hline
$x_1=x_1x_2x_3$,&$x_1\leq x_2=x_3\cup x_1=x_2\leq x_3\cup$&$3$\\
$x_1x_2x_3=x_1$&$x_1=x_3\leq x_2$&\\
\hline
$x_2=x_1x_2x_3$,&$\uparrow$&$3$\\
$x_1x_2x_3=x_2$&&\\
\hline
$x_3=x_1x_2x_3$,&$\uparrow$&$3$\\
$x_1x_2x_3=x_3$&&\\
\hline
$x_1=x_2x_3$,&$x_1=x_2\leq x_3\cup x_1=x_3\leq x_2$&$2$\\
$x_2x_3=x_1$&&\\
\hline
$x_2=x_1x_3$,&$\uparrow$&$2$\\
$x_1x_3=x_2$&&\\
\hline
$x_3=x_1x_2$,&$\uparrow$&$2$\\
$x_1x_2=x_3$&&\\
\hline
$x_1x_2=x_1x_3$,&$x_1=x_2\leq x_3\cup x_1=x_3\leq x_2\cup$&$4$\\
$x_1x_3=x_1x_2$&$x_1\leq x_2= x_3\cup x_2=x_3\leq x_1$&\\
\hline
$x_1x_2=x_2x_3$,&$\uparrow$&$4$\\
$x_2x_3=x_1x_2$&&\\
\hline
$x_1x_3=x_2x_3$,&$\uparrow$&$4$\\
$x_2x_3=x_1x_3$&&\\
\hline
$x_1x_2=x_1x_2x_3$,&$x_1=x_2\leq x_3\cup x_1=x_3\leq x_2\cup$&$5$\\
$x_1x_2x_3=x_1x_2$&$x_1\leq x_2= x_3\cup x_2=x_3\leq x_1\cup$&\\
&$x_2\leq x_1= x_3$&\\
\hline
$x_1x_3=x_1x_2x_3$,&$\uparrow$&$5$\\
$x_1x_2x_3=x_1x_3$&&\\
\hline
$x_2x_3=x_1x_2x_3$,&$\uparrow$&$5$\\
$x_1x_2x_3=x_2x_3$&&\\
\hline
\end{tabular} 

One can directly compute the average number of irreducible components of algebraic sets defined by equations in three variables:
\begin{equation}
\overline{\Irr}(3,2)=\frac{6+2(3+3+3+2+2+2+4+4+4+5+5+5)}{25}=\frac{90}{25}=3.6
\label{eq:Irr(3,2)_handy}
\end{equation}

Recall that in Section~\ref{sec:average} we obtain the general expression for $\overline{\Irr}(n,l)$~(\ref{eq:Irr}). Clearly,~(\ref{eq:Irr}) gives~(\ref{eq:Irr(3,2)_handy}) for $n=3$, $l=2$ (see the proof in~(\ref{eq:Irr_n_2}) and~(\ref{eq:Irr_3_2_from_formula})).

\section{Decompositions of algebraic sets}
\label{sec:decomposition_properties}
Let $Y$ denote the solution set of an equation $t(X)=s(X)$ over the semilattice $L_l=\{a_1,a_2,\ldots,a_l\}$. The table above shows that any irreducible component divides the variables $X$ into $l$ classes and sorts the classes in some order. The following definition formalizes such properties of irreducible components. 

A disjoint partition $\sigma=(X_1,X_2,\ldots,X_l)$ of the set $X=\{x_1,x_2,\ldots,x_n\}$ is called \textit{ordered} if there is a linear order $\leq_\sigma$ on $\sigma$: $X_1\leq_\sigma X_2\leq_\sigma\ldots\leq_\sigma X_l$. Let $\chi_\s(x_i)$ denote the class $X_k$ with $x_i\in X_k$.

We shall denote $x_i=_\s x_j$ ($x_i\leq_\s x_j$) if $\chi(x_i)=\chi(x_j)$ (respectively, $\chi_\s(x_i)\leq_\s\chi_\s(x_j)$). 

An ordered partition $\sigma$ is $Y$-\textit{irreducible} if the set $X_1$ (the minimal set of the order $\leq_\sigma$) contains a variable from $t(X)$ and a variable from $s(X)$.  

For example, an equation $x_1x_2x_3=x_1$ over $L_2$ has the following $Y$-irreducible partitions: $(\{x_1\},\{x_2,x_3\})$, $(\{x_1,x_2\},\{x_3\})$, $(\{x_1,x_3\},\{x_2\})$. Such partitions obviously correspond to irreducible components of $\V(x_1x_2x_3=x_1)$ in the table above. 

Any $Y$-irreducible partition $\sigma$ defines an algebraic set $Y_\sigma$ as follows
\[
Y_\sigma=\V(\Ss_\sigma)=\V(\bigcup_{x_i=_\s x_j}\{x_i=x_j\}\bigcup_{x_i<_\s x_j}\{x_i\leq x_j\}).
\]
For example, the partition $\s=(\{x_2,x_3\},\{x_1\})$  defines the system
\[
\Ss_\s=\{x_2=x_3,x_2\leq x_1,x_3\leq x_1\}.
\]
for $Y=\V(\{x_1x_2=x_1x_3\})$.

\begin{lemma}
The set $Y_\sigma$ defined by a $Y$-irreducible partition $\sigma$ is an irreducible algebraic set, and moreover $\Gamma(Y_\s)\cong L_l$.  
\label{l:Y_sigma_is_irreducible}
\end{lemma}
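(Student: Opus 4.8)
The plan is to reduce the whole statement to the single claim $\Gamma(Y_\s)\cong L_l$: since an isomorphism is in particular an embedding, Proposition~\ref{pr:gamma_is_embedded_for_irr} then yields at once that $Y_\s$ is irreducible. (The $Y$-irreducibility hypothesis on $\s$ plays no role for \emph{this} conclusion; only the nonemptiness of the $l$ classes matters here, and that is guaranteed because $\s$ is a genuine partition with $n>l$. The condition on $X_1$ is what will be used later to ensure $Y_\s\subseteq Y$.) So I first describe $Y_\s$ explicitly. From the defining system $\Ss_\s$, a point $P$ lies in $Y_\s$ exactly when $P$ is constant on each class, say $p_i=v_k$ for all $x_i\in X_k$, with values satisfying $v_1\leq v_2\leq\ldots\leq v_l$; conversely every weakly increasing tuple $(v_1,\ldots,v_l)$ over $L_l$ gives such a point. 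Using the idempotent min-multiplication, a term $t$ then evaluates as $t(P)=\min\{p_i:x_i\in\Var(t)\}=v_{\mu(t)}$, where $\mu(t)=\min\{k:X_k\cap\Var(t)\neq\emptyset\}$ is the least class index met by $\Var(t)$.

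Next I would build the candidate isomorphism $\varphi\colon\Gamma(Y_\s)\to L_l$, $[t]_{\sim_{Y_\s}}\mapsto a_{\mu(t)}$, and verify the three required properties. It is well defined and injective because $t\sim_{Y_\s}s\iff\mu(t)=\mu(s)$: if the least indices agree, the two terms take the common value $v_{\mu(t)}$ at every point of $Y_\s$; if $\mu(t)\neq\mu(s)$, I separate them by evaluating at the strictly increasing tuple $v_k=a_k$, where the values $a_{\mu(t)}\neq a_{\mu(s)}$ differ. It is surjective because each class $X_k$ is nonempty, so choosing any $x_i\in X_k$ gives $\varphi([x_i])=a_k$. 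Finally it is a homomorphism because $\Var(ts)=\Var(t)\cup\Var(s)$ forces $\mu(ts)=\min(\mu(t),\mu(s))$, whence $\varphi([t][s])=\varphi([ts])=a_{\min(\mu(t),\mu(s))}=a_{\mu(t)}a_{\mu(s)}=\varphi([t])\varphi([s])$. Thus $\varphi$ is an isomorphism and $\Gamma(Y_\s)\cong L_l$.

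The one genuinely load-bearing step is the separation argument behind $\mu(t)\neq\mu(s)\Rightarrow t\not\sim_{Y_\s}s$: it requires that $(a_1,\ldots,a_l)$ be a legitimate \emph{strictly} increasing parameter for a point of $Y_\s$, which is available precisely because $\leq_\s$ has exactly $l$ blocks and $L_l$ has exactly $l$ elements $a_1<\ldots<a_l$. This is where equality (rather than a mere embedding) is forced, and where the hypotheses $n>l$ and "$\s$ is a partition into $l$ nonempty classes" are actually consumed. Everything else — the description of $Y_\s$, surjectivity, and the homomorphism check — is routine, and the irreducibility of $Y_\s$ drops out of Proposition~\ref{pr:gamma_is_embedded_for_irr} the moment the isomorphism is in hand.
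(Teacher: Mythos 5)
Your proof is correct, and its top-level skeleton is the same as the paper's: both arguments reduce the lemma to the claim $\Gamma(Y_\s)\cong L_l$ and then get irreducibility from Proposition~\ref{pr:gamma_is_embedded_for_irr}, an isomorphism being in particular an embedding. Where you differ is in how the isomorphism is established. The paper reads off a presentation of $\Gamma(Y_\s)$ from the system $\Ss_\s$ (generators $x_1,\ldots,x_n$, relations $x_i=x_j$ for $x_i=_\s x_j$ and $x_i\leq x_j$ for $x_i\leq_\s x_j$), asserts that the generators are then linearly ordered into $l$ classes, and concludes $\Gamma(Y_\s)\cong L_l$ in one line. You instead work directly with the definition of $\Gamma(Y_\s)$ as terms modulo $\sim_{Y_\s}$: you describe $Y_\s$ concretely as the weakly increasing tuples constant on classes, send $[t]$ to $a_{\mu(t)}$ where $\mu(t)$ is the least class index met by $\Var(t)$, and check well-definedness, injectivity, surjectivity and the homomorphism property, with injectivity secured by evaluating at the strictly increasing point --- which is precisely the point $P_\s$ that the paper only introduces later, before Lemma~\ref{l:about_point_P_sigma}. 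This buys you something real: the coordinate semilattice is defined by term equivalence on $Y_\s$, not by generators and relations, so the paper's presentation step tacitly assumes that no identifications beyond those forced by $\Ss_\s$ hold on $Y_\s$; your separation argument at the strictly increasing point is exactly the justification of that tacit step, at the cost of a longer verification. Your side remark is also consistent with the paper: neither proof uses the $Y$-irreducibility condition on $X_1$, which is only needed later to get $Y_\s\subseteq Y$ in Lemma~\ref{l:union_of_Y}.
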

\begin{proof}
By the definition of a coordinate semilattice, $\Gamma(Y_\sigma)$ is generated by the elements $\{x_1,x_2,\ldots,x_n\}$ and has the following defined relations
\[
\{x_i=x_j\mid \mbox{ if $x_i=_\s x_j$}\}\cup
\{x_i\leq x_j\mid \mbox{ if $x_i\leq_\s x_j$}\}.
\]
It is easy to see that all elements $x_i$ are linearly ordered in $\Gamma(Y_\s)$.   
Thus, $\Gamma(Y_\sigma)$ is a linearly ordered semilattice, and it is isomorphic to $L_l$. By Proposition~\ref{pr:gamma_is_embedded_for_irr}, the set $Y_\sigma$ is irreducible.  
\end{proof}

The following lemma gives the decomposition of the set $Y=\V(t(X)=s(X))$ via ordered partitions. 

\begin{lemma}
The set $Y=\V(t(X)=s(X))$ is a union
\begin{equation}
\label{eq:union_of_Y}
Y=\bigcup_{\mbox{$\sigma$ is $Y$-irreducible}}Y_\sigma
\end{equation}
\label{l:union_of_Y}
\end{lemma}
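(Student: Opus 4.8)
The plan is to prove the two inclusions of (\ref{eq:union_of_Y}) separately, relying throughout on the pointwise criterion recalled in Section~2: a point $P=(p_1,\ldots,p_n)$ lies in $Y=\V(t(X)=s(X))$ if and only if the global minimum $\min_k p_k$ of its coordinates is attained simultaneously at some variable of $\Var(t)$ and at some variable of $\Var(s)$.

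For the inclusion $\bigcup_\sigma Y_\sigma\subseteq Y$ I would take a $Y$-irreducible partition $\sigma=(X_1,\ldots,X_l)$ and a point $P\in Y_\sigma$. The defining system $\Ss_\sigma$ forces all coordinates indexed by the minimal class $X_1$ to share a common value $b$ and forces every other coordinate to be $\ge b$ (since $X_1<_\sigma X_k$ for $k>1$); hence $b=\min_k p_k$. As $\sigma$ is $Y$-irreducible, $X_1$ contains a variable of $t$ and a variable of $s$, so the global minimum $b$ is attained both in $\Var(t)$ and in $\Var(s)$, and $P\in Y$ by the criterion. This is the routine direction.

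The substantive direction is $Y\subseteq\bigcup_\sigma Y_\sigma$: given a solution $P$ I must exhibit one $Y$-irreducible $\sigma$ with $P\in Y_\sigma$. First I would group variables by value, letting $b_1<\cdots<b_m$ ($m\le l$) be the distinct coordinate values and $V_j=\{x_i\mid p_i=b_j\}$. The criterion guarantees that the least class $V_1$ contains a variable $x_{i_0}\in\Var(t)$ and a variable $x_{j_0}\in\Var(s)$. I would then refine the ordered family $(V_1,\ldots,V_m)$ into exactly $l$ nonempty ordered classes by splitting each $V_j$ into $c_j\ge 1$ blocks with $\sum_j c_j=l$ and $c_j\le|V_j|$, listing the blocks of $V_1$ first, then those of $V_2$, and so on. Any such refinement gives $P\in Y_\sigma$, because the system $\Ss_\sigma$ imposes equalities only inside a single value class and orders the classes compatibly with $b_1\le\cdots\le b_m$. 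To make $\sigma$ genuinely $Y$-irreducible I would additionally arrange that the very first block $X_1$ contains both $x_{i_0}$ and $x_{j_0}$.

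The main obstacle is precisely this last requirement, since reserving the designated variables for $X_1$ costs one unit of splitting freedom in the minimal class, and I must check the integers $c_j$ can still be chosen with $\sum_j c_j=l$. Writing $\mu\in\{1,2\}$ for the number of distinct variables that must go into $X_1$ (so $\mu=1$ when $x_{i_0}=x_{j_0}$ is a common variable and $\mu=2$ otherwise), the admissible values of $c_1$ fill $[1,\,|V_1|-\mu+1]$ while $c_j$ fills $[1,\,|V_j|]$ for $j\ge 2$; hence $\sum_j c_j$ realizes every integer between $m$ and $n-\mu+1$. The target value $l$ lies in this range: indeed $m\le l$, and since $\mu\le 2$ and $n>l$ we get $n-\mu+1\ge n-1\ge l$. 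This is the only point at which the standing hypothesis $n>l$ is used, and it is exactly what guarantees enough room to keep the minimal block $Y$-irreducible. Choosing such $c_j$ together with a splitting of $V_1$ whose first block contains $x_{i_0}$ and $x_{j_0}$ yields the desired $\sigma$, completing the inclusion and hence the equality~(\ref{eq:union_of_Y}).
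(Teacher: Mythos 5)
Your proof is correct and follows essentially the same route as the paper: the easy inclusion $Y_\sigma\subseteq Y$ is argued identically, and for $Y\subseteq\bigcup_\sigma Y_\sigma$ both you and the author refine the partition of variables by coordinate values into an ordered partition with exactly $l$ classes, keeping a designated $t$-variable and $s$-variable together in the minimal class and invoking $n>l$ to guarantee enough room to split. The only cosmetic difference is that the paper realizes the refinement by an iterative one-variable-at-a-time splitting procedure, whereas you choose the block counts $c_j$ in one shot with an explicit feasibility check; this is the same idea packaged differently.
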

\begin{proof}
Let $P=(p_1,p_2,\ldots,p_n)\in Y$. One can define an equivalence relation $\sim_P$ as follows
\[
x_i\sim_P x_j\Leftrightarrow p_i=p_j.
\] 
Thus, we obtain equivalence classes $\{X_1^P,X_2^P,\ldots,X_k^P\}$. Since $p_i\in L_l$, $k\leq l$. One can define a linear order $x_i\leq_P x_j$ if $p_i\leq p_j$. The order $\leq_P$ induces a linear order over the classes $\{X_i\}$. Let us fix a pair of variables $x_t,x_s\in X_1^P$ (probably, $x_t,x_s$ is the same variable) such that $x_t\in\Var(t)$ and $x_s\in\Var(s)$ (such pair $(x_t,x_s)$ always exists, since $P$ satisfies the equation $t(X)=s(X)$). Let us find a set $Y_\s$ with $P\in Y_\s$ by the following procedure.

{\bf Procedure}

Input: a set of $k$ equivalence classes $\s_0=(X_1^P,X_2^P,\ldots,X_k^P)$ with the linear order $\leq_P$.

Output: $\s=(X_1,X_2,\ldots,X_l)$ with a linear order $\leq_\s$.

Step 0: Put $\s=\sigma_0$. If $l=k$ terminate the procedure, otherwise go to the step 1.

Step $j$ ($1\leq j\leq l-k$): 
\begin{enumerate}
\item Take an arbitrary equivalence class $X_i\in\sigma=(X_1,X_2,\ldots,X_{k+j-1})$ such that $|X_i|\geq 2$ and $X_i$ contains a variable $x\in X\setminus\{x_t,x_s\}$. Such class always exists, since $n>l>k+j-1$.
\item Move $x$ from $X_i$ to a new class $X^\pr$ and define a linear order $\leq_\s$ by $X_i\leq_\s X^\pr\leq X_{i+1}$. Put $\s=(X_1,X_2,\ldots,X_{i},X^\pr,X_{i+1},\ldots,X_{l+j-1})$. Go to the next step.
\end{enumerate}

Roughly speaking, the procedure increases the number of classes preserving the relation $<_\s$.

After the procedure we obtain an ordered partition $\s$ of $l$ equivalence classes $X_i$. The procedure does not move the variables $x_t,x_s$, therefore $x_t,x_s\in X_1$ and $\sigma$ is a $Y$-irreducible partition.

Let us prove $P\in Y_\s=\V(\Ss_\s)$. An equation $x_i\leq x_j\in \Ss_\s$ (one can similarly consider an equality $x_i=x_j\in\Ss_\s$) is not satisfied by $P$ if $p_i> p_j$ or equivalently $x_j<_P x_i$. Since the procedure preserves the relation $<_\s$, we have  $x_j<_\s x_i$, and by the definition of $\Ss_\s$, the equation $x_i\leq x_j$ can not occur in $\Ss_\s$. Thus, we came to the contradiction.

\bigskip

Let us prove now $Y_\s\subseteq Y$ for each $\s$. Consider a point $P=(p_1,p_2,\ldots,p_n)\in Y_\s$. Since $\s=(X_1,X_2,\ldots,X_l)$ is a $Y$-irreducible partition, the class $X_1$ contains variables $x_t\in\Var(t)$, $x_s\in\Var(s)$ and $p_t=p_s$. Since $X_1$ is the minimal class of the order $\leq_\s$, 
\[
x_t\leq x_i\in\Ss_\s, \; x_s\leq x_i\in\Ss_\s \mbox{ for any }i\in [1,n]\setminus\{t,s\}.
\]
Thus, $p_t=p_s\leq p_i$ for any $1\leq i\leq n$, and we have 
\[
t(P)=p_t=p_s=s(P)\Rightarrow P\in\V(t(X)=s(X))=Y.
\]  

\end{proof}

Let $\sigma=(X_1,X_2,\ldots,X_l)$ be a $Y$-irreducible partition of $X$. Let us define a point $P_\sigma=(p_1,p_2,\ldots,p_n)\in L_l^n$ by
\[
p_i=a_k\mbox{ if $x_i\in X_k$}. 
\]

\begin{lemma}
The point $P_\sigma$ belongs to the set $Y_\sigma$, and $P_\sigma\notin Y_{\sigma^\pr}$ for each $Y$-irreducible partition $\sigma^\pr\neq \sigma$. Thus, in the union~(\ref{eq:union_of_Y}) $Y_{\sigma}\nsubseteq Y_{\sigma^\pr}$ for distinct partitions $\sigma,\sigma^\pr$.
\label{l:about_point_P_sigma}
\end{lemma}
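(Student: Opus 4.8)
The plan is to verify the membership $P_\sigma\in Y_\sigma$ directly from the definition of $\Ss_\sigma$, and then to exploit the fact that the coordinates of $P_\sigma$ recover the ordered partition $\sigma$ completely. For the first claim I would check each defining relation of $\Ss_\sigma$ at the point $P_\sigma$: if $x_i=_\s x_j$, then $x_i,x_j$ lie in a common class $X_k$, so $p_i=p_j=a_k$ and the equation $x_i=x_j$ holds; if $x_i<_\s x_j$, say $x_i\in X_k$ and $x_j\in X_m$ with $k<m$, then $p_i=a_k<a_m=p_j$, so $p_ip_j=a_k=p_i$ and the inequality $x_i\leq x_j$ holds. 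Hence $P_\sigma$ satisfies every equation of $\Ss_\s$, i.e.\ $P_\sigma\in Y_\sigma$.

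For the second claim, the key observation is that, since all $l$ classes of $\sigma$ are nonempty, the point $P_\sigma$ takes exactly the values $a_1<a_2<\ldots<a_l$, one per class. Consequently $p_i=p_j\Leftrightarrow x_i=_\s x_j$ and $p_i<p_j\Leftrightarrow x_i<_\s x_j$; in other words, the coordinates of $P_\sigma$ encode $\sigma$ uniquely. Now I would suppose, for contradiction, that $P_\sigma\in Y_{\sigma^\pr}$ for some $Y$-irreducible $\sigma^\pr\neq\sigma$. Membership in $Y_{\sigma^\pr}=\V(\Ss_{\sigma^\pr})$ forces $p_i=p_j$ whenever $x_i=_{\sigma^\pr}x_j$ and $p_i\leq p_j$ whenever $x_i<_{\sigma^\pr}x_j$. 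By the encoding, the first condition says $x_i=_{\sigma^\pr}x_j\Rightarrow x_i=_\s x_j$, i.e.\ every class of $\sigma^\pr$ is contained in a class of $\sigma$. Since $\sigma$ and $\sigma^\pr$ both consist of $l$ nonempty classes partitioning $X$, a pigeonhole argument shows this containment of classes is in fact a bijection, so the two partitions coincide as unordered partitions.

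It then remains to compare the two orders. With the classes identified, the condition $p_i\leq p_j$ for $x_i<_{\sigma^\pr}x_j$ together with the encoding gives $x_i<_{\sigma^\pr}x_j\Rightarrow x_i\leq_\s x_j$; since distinct classes receive distinct values, this reads $x_i<_{\sigma^\pr}x_j\Rightarrow x_i<_\s x_j$ for variables in different classes. As $\leq_\s$ and $\leq_{\sigma^\pr}$ are linear orders on the same $l$ classes, this implication between two total orders forces $\leq_\s\,=\,\leq_{\sigma^\pr}$: otherwise some pair of classes would be ordered oppositely by the two orders, contradicting the implication. Hence $\sigma^\pr=\sigma$, a contradiction, and so $P_\sigma\notin Y_{\sigma^\pr}$. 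The final assertion is then immediate: $P_\sigma\in Y_\sigma\setminus Y_{\sigma^\pr}$ witnesses $Y_\sigma\nsubseteq Y_{\sigma^\pr}$ for every $\sigma^\pr\neq\sigma$, so no member of the union~(\ref{eq:union_of_Y}) is redundant. The main obstacle will be the two-stage rigidity argument of the last two paragraphs---first showing that agreement of the coordinate-equalities forces equality of the underlying set partitions (crucially using that both have exactly $l$ nonempty classes), and then that compatibility of the strict orders forces the orders to coincide; everything else is routine verification.
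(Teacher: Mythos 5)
Your proof is correct and takes essentially the same approach as the paper: both arguments exhibit $P_\sigma$ as a point of $Y_\sigma$ whose coordinates encode the ordered partition $\sigma$ exactly, and hence violate some relation of $\Ss_{\sigma^\prime}$ for every $Y$-irreducible $\sigma^\prime\neq\sigma$. The only difference is one of completeness: the paper simply asserts that for distinct ordered partitions there exist variables with $x_i<_\sigma x_j$ but $x_i\geq_{\sigma^\prime}x_j$, whereas your contrapositive two-stage rigidity argument (pigeonhole on the $l$ nonempty classes to identify the underlying partitions, then comparison of the two total orders) actually proves that assertion, so your write-up supplies the detail the paper omits.
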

\begin{proof}
One can directly prove that $P_\sigma\in\V(\Ss_\sigma)=Y_\s$. 

Let us take an irreducible partition 
\[
\s^\pr=(X_1^\pr,X_2^\pr,\ldots,X_l^\pr)\neq\s=(X_1,X_2,\ldots,X_l).
\]
There exist variables $x_i,x_j$ such that $x_i<_{\s} x_j$ but $x_i\geq_{\s^\pr} x_j$. For the point $P_\s$ we have $p_i<p_j$, therefore $P_\s$ does not satisfy the equation $x_i\geq x_j\in \Ss_{\s^\pr}$, and $P_\s\notin Y_{\s^\pr}$.  
\end{proof}

According to Lemmas~\ref{l:Y_sigma_is_irreducible},~\ref{l:union_of_Y},~\ref{l:about_point_P_sigma}, we obtain the following statement.

\begin{theorem}
The number of $Y$-irreducible partitions of a set $Y=\V(t(X)=s(X))$ is equal to the number of irreducible components of $Y$.  
\label{th:number_of_irr_compionents}
\end{theorem}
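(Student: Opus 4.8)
The plan is to observe that Lemmas~\ref{l:Y_sigma_is_irreducible},~\ref{l:union_of_Y} and~\ref{l:about_point_P_sigma} already produce an irreducible decomposition of $Y$ of exactly the shape guaranteed by the uniqueness Proposition containing~(\ref{eq:union_Y_general}), and then to read off the count by invoking that uniqueness.

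First I would consider the family $\{Y_\sigma\}$ indexed by all $Y$-irreducible partitions $\sigma$ of $X$. Lemma~\ref{l:union_of_Y} gives $Y=\bigcup_\sigma Y_\sigma$, so the family covers $Y$, and it is finite since there are only finitely many ordered partitions of a finite set. Lemma~\ref{l:Y_sigma_is_irreducible} shows that each $Y_\sigma$ is irreducible. Hence $\{Y_\sigma\}$ is a finite decomposition of $Y$ into irreducible algebraic sets.

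Next I would check that this decomposition meets the incomparability requirement $Y_\sigma\nsubseteq Y_{\sigma^\pr}$ for $\sigma\neq\sigma^\pr$, which is precisely Lemma~\ref{l:about_point_P_sigma}: the witnessing point $P_\sigma$ lies in $Y_\sigma$ but not in $Y_{\sigma^\pr}$. In particular $Y_\sigma\neq Y_{\sigma^\pr}$ for distinct partitions, so the assignment $\sigma\mapsto Y_\sigma$ is injective and the number of sets in the family equals the number of $Y$-irreducible partitions.

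Finally, the uniqueness Proposition asserts that the decomposition~(\ref{eq:union_Y_general}) of $Y$ into irreducible components, subject to $Y_i\nsubseteq Y_j$, is unique up to permutation. Since $\{Y_\sigma\}$ satisfies all its hypotheses, this family must be the collection of irreducible components of $Y$; comparing cardinalities yields the theorem. The only routine point is that the pairwise condition $Y_\sigma\nsubseteq Y_{\sigma^\pr}$ upgrades to genuine irredundancy $Y_\sigma\nsubseteq\bigcup_{\sigma^\pr\neq\sigma}Y_{\sigma^\pr}$, which is automatic: an irreducible set contained in a finite union of algebraic sets must lie inside one of them. I expect no real obstacle here, since all the substance is carried by the three lemmas and this final statement is essentially their bookkeeping.
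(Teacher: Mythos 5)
Your proposal is correct and takes essentially the same route as the paper, whose proof consists precisely of citing Lemmas~\ref{l:Y_sigma_is_irreducible}, \ref{l:union_of_Y} and~\ref{l:about_point_P_sigma} and appealing to the uniqueness of the decomposition~(\ref{eq:union_Y_general}). The details you spell out --- injectivity of $\sigma\mapsto Y_\sigma$ via the points $P_\sigma$, and the upgrade from pairwise incomparability to genuine irredundancy using irreducibility --- are exactly the bookkeeping the paper leaves implicit.
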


\bigskip

The next statement describes the properties the union~(\ref{eq:union_of_Y}).

\begin{proposition}
Let~(\ref{eq:union_of_Y}) be a union of the irreducible components of a set $Y=\V(t(X)=s(X))$ over $L_l$. Then
\begin{enumerate}
\item a point $P$ belongs to all $Y_\s$ iff $P=(a,a,,\ldots,a)$ for some $a\in L_l$;
\item 
\[
Y_\s\setminus\bigcup_{\s^\pr\neq\s}Y_{\s^\pr}=\{P_\s\}
\]
(it follows that the decomposition~(\ref{eq:union_of_Y}) is redundant, i.e. each point of $Y\setminus\bigcup_\s\{P_\s\}$ is covered by at least two irreducible components);
\item all irreducible components are isomorphic to each other;

\item $|Y_\s|=\binom{2l-1}{l}$ for each $\s$.

\end{enumerate}
\label{pr:redundant}
\end{proposition}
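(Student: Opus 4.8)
The plan is to use throughout the explicit description of $Y_\s=\V(\Ss_\s)$: a point $Q=(q_1,\ldots,q_n)$ lies in $Y_\s$ if and only if $Q$ is constant on each class $X_k$ and the common values $v_k$ (with $v_k=q_i$ for $x_i\in X_k$) satisfy $v_1\le v_2\le\ldots\le v_l$ in $L_l$; thus $Y_\s$ is in bijection with the nondecreasing sequences $(v_1,\ldots,v_l)$, $v_k\in\{a_1,\ldots,a_l\}$. With this in hand I would first dispatch items (3) and (4). Item (3) is immediate from Lemma~\ref{l:Y_sigma_is_irreducible}: every $\Gamma(Y_\s)\cong L_l$, so all coordinate semilattices are isomorphic and hence, by the definition of isomorphic algebraic sets, all $Y_\s$ are isomorphic. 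Item (4) follows from the bijection: counting nondecreasing length-$l$ sequences over an $l$-element chain is a stars-and-bars count giving $\binom{l+l-1}{l}=\binom{2l-1}{l}$, and this is exact since each class is nonempty (distinct sequences give distinct points) and every such sequence does satisfy $\Ss_\s$.

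For item (1) the backward direction is trivial, as a constant point satisfies every equation of every $\Ss_\s$. For the forward direction I would show that $P\in\bigcap_\s Y_\s$ forces $p_i=p_j$ for all $i,j$. Call a variable $x_j$ \emph{forced} if $\Var(t)=\{x_j\}$ or $\Var(s)=\{x_j\}$; since two forced variables would make $\Var(t)\cup\Var(s)$ have at most two elements (hence $n\le 2$, whereas $n>l$ and the case $l=1$ is trivial), at most one variable is forced. The key construction is that whenever $x_j$ is \emph{not} forced one can build a $Y$-irreducible partition with $x_i<_\s x_j$ for any prescribed $x_i\ne x_j$: place $x_j$ in the top class, take a variable of $\Var(t)\setminus\{x_j\}$ and one of $\Var(s)\setminus\{x_j\}$ into the minimal class, put $x_i$ below $x_j$, and fill the remaining classes using $n>l$. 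Applying this in both directions to two non-forced variables yields $p_i\le p_j$ and $p_j\le p_i$, so all non-forced coordinates share one value $v$; and if a forced $x_f$ exists (say $\Var(t)=\{x_f\}$), a partition whose minimal class is $\{x_f,x\}$ for a suitable non-forced $x$ (an element of $\Var(s)$ when $x_f\notin\Var(s)$) forces $p_f=v$ as well. The fiddly points here are the bookkeeping of the single forced variable and checking that $l$ nonempty classes can always be assembled, both handled by the inequality $n>l$.

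Item (2) is the heart of the proposition and the step I expect to be the main obstacle. That $P_\s$ belongs to the left-hand side is exactly Lemma~\ref{l:about_point_P_sigma}. For the reverse inclusion I would prove that any $Q\in Y_\s$ with $Q\ne P_\s$ lies in a second component. By the bijection above, $Q=P_\s$ holds precisely when its value sequence is $a_1<a_2<\ldots<a_l$, so $Q\ne P_\s$ means the $v_k$ take at most $l-1$ distinct values, i.e.\ the partition of $X$ by equal coordinates of $Q$ has fewer than $l$ classes. The goal is then to exhibit two distinct $Y$-irreducible partitions that refine this value-partition into $l$ ordered classes and both contain $Q$; producing two such partitions shows $Q\in Y_{\s^\pr}$ for some $\s^\pr\ne\s$, removing $Q$ from the left-hand side and leaving exactly $\{P_\s\}$. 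Generically one locates two consecutive classes of equal value and swaps them. The delicate case, which I expect to be the crux, is when the only repeated value is the minimal one, since a blind swap can destroy $Y$-irreducibility of the minimal class; I would resolve it by reallocating where the subdivision happens — keeping the entire minimal value-class as the minimal class (it still contains a variable of $t(X)$ and of $s(X)$) and splitting some higher value-class instead, which is feasible precisely because $n>l$ leaves a spare variable to split.
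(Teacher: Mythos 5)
Your items (1), (3) and (4) are fine: (3) and (4) coincide with the paper's arguments, and your item (1) is actually \emph{more} careful than the paper's, which merely asserts that for every pair $q_i<q_j$ some $Y$-irreducible partition $\s$ with $x_i\geq_\s x_j$ exists; your ``forced variable'' construction supplies exactly that missing existence argument. Your strategy for item (2) — describe the components containing $Q$ as the $Y$-irreducible ordered $l$-class refinements of the value-partition of $Q$, and exhibit a second one when $Q\neq P_\s$ — is also the right idea, and is close in spirit to the paper's proof, which instead swaps a single pair of variables $x_i,x_j$ lying in consecutive classes with $p_i=p_j$ and treats the bottom class by a case analysis on whether $x_j\in\Var(t)$ or $x_j\in\Var(s)$.

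However, your resolution of the delicate case of item (2) has a genuine gap. You claim that when the only repeated value is the minimal one you may keep the whole minimal value-class $W_1=X_1\cup X_2$ as the new bottom class and split some higher value-class, ``feasible precisely because $n>l$ leaves a spare variable to split.'' That is false: $n>l$ guarantees a spare variable \emph{somewhere}, but it may sit inside $W_1$, with every higher value-class a singleton. Concretely, take $l=3$ and the equation $x_1x_2=x_1x_3x_4$, so $\Var(t)=\{x_1,x_2\}$, $\Var(s)=\{x_1,x_3,x_4\}$; let $\s=(\{x_1\},\{x_3,x_4\},\{x_2\})$ (irreducible, since $x_1$ lies in both terms) and $Q=(a_1,a_2,a_1,a_1)\in Y_\s\setminus\{P_\s\}$. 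Here the value-partition is $W_1=\{x_1,x_3,x_4\}$, $W_2=\{x_2\}$, and the only repeated value is the minimal one. The block swap of $X_1$ and $X_2$ yields the bottom class $\{x_3,x_4\}\subseteq\Var(s)\setminus\Var(t)$, which is not $Y$-irreducible, and the unique higher value-class $W_2$ is a singleton, so it cannot be split: neither of your two moves applies. The proposition is still true for this $Q$ (for instance $\s^\pr=(\{x_1,x_3\},\{x_4\},\{x_2\})$ is a different component containing $Q$), which points to the missing third move: when all higher value-classes are singletons one has $|W_1|=n-(l-2)\geq 3$, and one must re-split $W_1$ itself into an ordered pair $(A,B)$ with $A$ containing a variable of $t$ and a variable of $s$ and with $(A,B)\neq(X_1,X_2)$; a short count, using that every variable of $W_1$ lies in $\Var(t)\cup\Var(s)$, shows at least two admissible splits always exist. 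With that case added your proof closes; as written it does not cover all points of $Y_\s\setminus\{P_\s\}$. (Your instinct that this is the crux is sound: the paper's own swap-based case analysis is at its shakiest on exactly this configuration, where $X_1=\{x_i\}$ and $x_i$ is the only variable of $X_1$ lying in $\Var(s)$.)
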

\begin{proof}
\begin{enumerate}
\item Obviously, $P=(a,a,\ldots,a)$ satisfies all systems $\Ss_\s$, so $P\in\bigcap_\s Y_\s$.

Let us consider a point $Q=(q_1,q_2,\ldots,q_n)$ with $q_i<q_j$. It is clear that $Q$ does not satisfy any set $Y_\s$ with $x_i\geq_\s x_j$. Thus, $Q\notin\bigcap_\s Y_\s$.

\item In Lemma~\ref{l:about_point_P_sigma} we proved $P_\s\in Y_\s$. By the definition, only the point $P_\s$ makes all inequalities $\leq$ of the system $\Ss_\s$ strict. Thus, for any point $P=(p_1,p_2,\ldots,p_n)\in Y_\s\setminus\{ P_\s\}$ there exists an equation $x_i\leq x_j\in\Ss_\s$ such that $p_i=p_j$.  Below we find an irreducible partition ${\s^\pr}$ with $P\in Y_{\s^\pr}$. 

Let $\s=(X_1,X_2,\ldots,X_l)$, $x_i\in X_{i^\pr}$ and without loss of generality one can assume that $x_j\in X_{i^\pr+1}$. If $i^\pr\neq 1$ we put $\s^\pr=(X_1^\pr,X_2^\pr,\ldots,X_l^\pr)$ where
\begin{equation}
X_k^\pr=\begin{cases}\
X_k\mbox{ if $k\neq i^\pr$, $k\neq i^\pr+1$},\\
(X_{i^\pr+1}\setminus\{x_{j}\})\cup\{x_i\}\mbox{ if $k=i^\pr+1$},\\
(X_{i^\pr}\setminus\{x_{i}\})\cup\{x_j\}\mbox{ if $k=i^\pr$}
\end{cases}
\label{eqq:ast}
\end{equation}
Since $X_1^\pr=X_1$, $\s^\pr$ is a $Y$-irreducible partition. The system $\Ss_{\s^\pr}$ contains $x_j\leq x_i$ instead of $x_i\leq x_j\in\Ss_\s$. Since other relations in the systems $\Ss_{\s^\pr},\Ss_\s$
are the same, $P\in \V(\Ss_{\s^\pr})=Y_{\s^\pr}$.

Suppose now $i^\pr=1$. Without loss of generality we assume $x_i\in\Var(t)$. By the definition of a $Y$-irreducible partition, there exists a variable $x_k\in X_1\cap \Var(s)$. If $x_j\in\Var(t)$ we can define $\s^\pr$ by~(\ref{eqq:ast}). In this case $X_1^\pr$ contains variables $x_j\in\Var(t)$, $x_k\in\Var(s)$, so $\s^\pr$ is an $Y$-irreducible partition and $P\in Y_{\s^\pr}$. Otherwise ($x_j\in\Var(s)$), one can take $x_k$ instead $x_i$ and repeat all reasonings above.

\item The statement immediately follows from Lemma~\ref{l:Y_sigma_is_irreducible}.

\item For $\s=(X_1,X_2,\ldots,X_l)$ the number $|Y_\s|$ is equal to the number of sequences $X_1\leq X_2\leq\ldots\leq X_l$ with $X_i\in\{a_1,a_2,\ldots,a_l\}$. According to combinatorics, the number of such monotone sequences is $\binom{2l-1}{l}$. 
\end{enumerate}
\end{proof}

\section{Average number of irreducible components}
\label{sec:average}

Let $\stir{n}{m}$ be the Stirling number of the second kind. By the definition, $\stir{n}{m}$ is the number of all partitions of an $n$-element set into $m$ non-empty unlabelled subsets. The number $\stir{n}{m}^\ast=m!\stir{n}{m}$ obviously equals the number of all partitions of $n$-element set into $m$ {\it labelled} non-empty subsets. Thus, there are exactly $\stir{n}{l}^\ast$ ordered partitions $\s=(X_1,X_2,\ldots,X_l)$ of the set of variables $X$, $|X|=n$ into $l$ equivalence classes. {\it An ordered partition $\s=(X_1,X_2,\ldots,X_l)$ is not $Y$-irreducible if either $X_1\subseteq \Var(t)\setminus\Var(s)$ or  $X_1\subseteq \Var(s)\setminus\Var(t)$} For a $(k_1,k_2)$-equation $t(X)=s(X)$ there exists
\[
\sum_{i=1}^{k_1}\binom{k_1}{i}\stir{n-i}{l-1}^\ast
\]
partitions $\s$ with $X_1\subseteq \Var(t)\setminus\Var(s)$. Similarly, there exist 
\[
\sum_{i=1}^{k_2}\binom{k_2}{i}\stir{n-i}{l-1}^\ast
\]
partitions $\s$ with $X_1\subseteq \Var(s)\setminus\Var(t)$. 

By Theorem~\ref{th:number_of_irr_compionents}, for a $(k_1,k_2)$-equation $t(X)=s(X)$ the number of irreducible components ($Y$-irreducible partitions) equals  
\begin{equation}
\Irr(k_1,k_2,n,l)=\stir{n}{l}^\ast-\sum_{i=1}^{k_1}\binom{k_1}{i}\stir{n-i}{l-1}^\ast-\sum_{i=1}^{k_2}\binom{k_2}{i}\stir{n-i}{l-1}^\ast.
\label{eq:Irr(k_1,k_2,n,l)}
\end{equation}
The average number of irreducible components of algebraic sets defined by equations from $Eq(n)$ is
\begin{multline*}
\overline{\Irr}(n,l)=\frac{\sum_{(k_1,k_2)\in K_n}\#Eq(k_1,k_2,n)\Irr(k_1,k_2,n,l)}{\#Eq(n)}=\\
\frac{\sum_{k_1=0}^{n-1}\sum_{k_2=0}^{n-k_1}\#Eq(k_1,k_2,n)\Irr(k_1,k_2,n,l)-\#Eq(0,n,n)\Irr(0,n,n,l)}{\#Eq(n)}
\end{multline*} 
Below we compute $\overline{\Irr}$ using the following denotations:
\begin{enumerate}
\item $A\stackrel{(1)}{=}B$: an expression $B$ is obtained from $A$ by the binomial theorem
\[
(a+b)^n=\sum_{i=0}^n\binom{n}{i}a^ib^{n-i}.
\]
\item $A\stackrel{(2)}{=}B$: an expression $B$ is obtained from $A$ by the following identity of binomial coefficients
\[
\binom{a}{b}\binom{b}{c}=\binom{a}{c}\binom{a-c}{b-c}.
\] 
\item $A\stackrel{(3)}{=}B$: an expression $B$ is obtained from $A$ by the recurrence relation of Stirling numbers
\[
\stir{a+1}{b}=b\stir{a}{b}+\stir{a}{b-1}.
\]
\item $A\stackrel{(4)}{=}B$: an expression $B$ is obtained from $A$ by the following identity of Stirling numbers
\[
\stir{a+1}{b+1}=\sum_{i=0}^a\binom{a}{i}\stir{i}{b}.
\]
Remark that in the last formula one can change the sum $\sum_{i=0}^a$ to $\sum_{i=c}^a$ ($c<b$), since $\stir{c}{b}=0$ for $c<b$.
\end{enumerate}

We have
\begin{multline*}
\#Eq(0,n,n)\Irr(0,n,n,l)=\binom{n}{0}\binom{n}{n}\left(\stir{n}{l}^\ast-\sum_{i=1}^{n}\binom{n}{i}\stir{n-i}{l-1}^\ast    \right)=\\
\stir{n}{l}^\ast-\sum_{i=1}^{n}\binom{n}{n-i}\stir{n-i}{l-1}^\ast=
\stir{n}{l}^\ast-\sum_{j=0}^{n-1}\binom{n}{j}\stir{j}{l-1}^\ast=
\stir{n}{l}^\ast-(l-1)!\sum_{j=0}^{n-1}\binom{n}{j}\stir{j}{l-1}\stackrel{(4)}{=}\\
\stir{n}{l}^\ast-(l-1)!\left(\stir{n+1}{l}-\stir{n}{l-1}\right)\stackrel{(3)}{=}\stir{n}{l}^\ast-(l-1)!l\stir{n}{l}=0,
\end{multline*} 

\begin{multline*}
\sum_{k_1=0}^{n-1}\sum_{k_2=0}^{n-k_1}\#Eq(k_1,k_2,n)\Irr(k_1,k_2,n)=\\
\sum_{k_1=0}^{n-1}\sum_{k_2=0}^{n-k_1}\binom{n}{k_1}\binom{n-k_1}{k_2}
\left(\stir{n}{l}^\ast-\sum_{i=1}^{k_1}\binom{k_1}{i}\stir{n-i}{l-1}^\ast-\sum_{i=1}^{k_2}\binom{k_2}{i}\stir{n-i}{l-1}^\ast\right)=\\
\sum_{k_1=0}^{n-1}\sum_{k_2=0}^{n-k_1}\binom{n}{k_1}\binom{n-k_1}{k_2}\stir{n}{l}^\ast-
\sum_{k_1=0}^{n-1}\sum_{k_2=0}^{n-k_1}\binom{n}{k_1}\binom{n-k_1}{k_2}\sum_{i=1}^{k_1}\binom{k_1}{i}\stir{n-i}{l-1}^\ast-\\
\sum_{k_1=0}^{n-1}\sum_{k_2=0}^{n-k_1}\binom{n}{k_1}\binom{n-k_1}{k_2}\sum_{i=1}^{k_2}\binom{k_2}{i}\stir{n-i}{l-1}^\ast=S_1-S_2-S_3,
\end{multline*} 
where
\[
S_1=\stir{n}{l}^\ast\sum_{k_1=0}^{n-1}\binom{n}{k_1}2^{n-k_1}\stackrel{(1)}{=}\stir{n}{l}^\ast(3^n-1),
\]
\begin{multline*}
S_2\stackrel{(2)}{=}
\sum_{k_1=0}^{n-1}\sum_{i=1}^{k_1}\binom{n}{k_1}\binom{k_1}{i}\stir{n-i}{l-1}^\ast\sum_{k_2=0}^{n-k_1}\binom{n-k_1}{k_2}\stackrel{(1)}{=}\\
\sum_{k_1=0}^{n-1}\sum_{i=1}^{k_1}\binom{n}{i}\binom{n-i}{k_1-i}\stir{n-i}{l-1}^\ast 2^{n-k_1}=
\sum_{i=1}^{n-1}\binom{n}{i}\stir{n-i}{l-1}^\ast\sum_{k_1=i}^{n-1}\binom{n-i}{k_1-i} 2^{n-k_1}=\\
\sum_{i=1}^{n-1}\binom{n}{i}\stir{n-i}{l-1}^\ast\sum_{j=0}^{n-i-1}\binom{n-i}{j} 2^{n-i-j}=
\sum_{i=1}^{n-1}\binom{n}{i}\stir{n-i}{l-1}^\ast\left(\sum_{j=0}^{n-i}\binom{n-i}{n-i-j} 2^{n-i-j} -1\right)\stackrel{(1)}{=}\\
\sum_{i=1}^{n-1}\binom{n}{i}\stir{n-i}{l-1}^\ast\left(3^{n-i} -1\right).
\end{multline*} 
Computing
\begin{multline*}
\sum_{i=1}^{n-1}\binom{n}{i}\stir{n-i}{l-1}^\ast=(l-1)!\sum_{j=1}^{n-1}\binom{n}{j}\stir{j}{l-1}\stackrel{(4)}{=}
(l-1)!\left(\stir{n+1}{l}- \stir{n}{l-1}\right)\stackrel{(3)}{=}\\
(l-1)!l\stir{n}{l}=\stir{n}{l}^\ast,
\end{multline*} 
we obtain 
\[
S_2=\sum_{i=1}^{n-1}\binom{n}{i}\stir{n-i}{l-1}^\ast 3^{n-i}-\stir{n}{l}^\ast=S(n,l)-\stir{n}{l}^\ast,
\]
where 
\[
S(n,l)=\sum_{i=1}^{n-1}\binom{n}{i}\stir{n-i}{l-1}^\ast 3^{n-i}.
\]

Let us compute
\begin{multline*}
S_3=\sum_{k_1=0}^{n-1}\sum_{i=1}^{n-k_1}\sum_{k_2=i}^{n-k_1}\binom{n}{k_1}\binom{n-k_1}{i}\binom{n-k_1-i}{k_2-i}\stir{n-i}{l-1}^\ast=\\
\sum_{k_1=0}^{n-1}\sum_{i=1}^{n-k_1}\binom{n}{k_1}\binom{n-k_1}{i}\stir{n-i}{l-1}^\ast\sum_{k_2=i}^{n-k_1}\binom{n-k_1-i}{k_2-i}\stackrel{(1)}{=}
\sum_{k_1=0}^{n-1}\sum_{i=1}^{n-k_1}\binom{n}{k_1}\binom{n-k_1}{i}\stir{n-i}{l-1}^\ast 2^{n-k_1-i}\stackrel{(2)}{=}\\
\sum_{k_1=0}^{n-1}\sum_{i=1}^{n-k_1}\binom{n}{i}\binom{n-i}{n-k_1-i}\stir{n-i}{l-1}^\ast 2^{n-k_1-i}=
\sum_{i=1}^{n}\binom{n}{i}\stir{n-i}{l-1}^\ast 2^{n-i}\sum_{k_1=0}^{n-i}\binom{n-i}{k_1} 2^{-k_1}\stackrel{(1)}{=}\\
\sum_{i=1}^{n}\binom{n}{i}\stir{n-i}{l-1}^\ast 2^{n-i}\left(1+\frac{1}{2}\right)^{n-i}=
\sum_{i=1}^{n}\binom{n}{i}\stir{n-i}{l-1}^\ast 3^{n-i}=S(n,l)+\binom{n}{n}\stir{n-n}{l-1}^\ast=S(n,l).
\end{multline*}

Finally, we obtain
\begin{multline}
\label{eq:Irr}
\overline{\Irr}(n,l)=\frac{S_1-S_2-S_3-0}{3^n-2}=
\frac{\stir{n}{l}^\ast(3^n-1)-(S(n,l)-\stir{n}{l}^\ast)-S(n,l)}{3^n-2}=\\
\frac{3^n\stir{n}{l}^\ast-2S(n,l)}{3^n-2}.
\end{multline}

Let us compute $\overline{\Irr}(n,2)$ using the following identities of the Stirling numbers
\[
\stir{n}{1}=1,\; \stir{n}{2}=2^{n-1}-1.
\]

We have
\[
S(n,2)=\sum_{i=1}^{n-1}\binom{n}{i}\cdot 1\cdot 3^{n-i}=\sum_{i=1}^{n-1}\binom{n}{i}3^{n-i}\stackrel{(1)}{=}4^n-3^n-1,
\]
therefore
\begin{equation}
\overline{\Irr}(n,2)=\frac{3^n\cdot 2(2^{n-1}-1)-2(4^n-3^n-1)}{3^n-2}=\frac{6^n-2\cdot 4^n+2}{3^n-2}.
\label{eq:Irr_n_2}
\end{equation}
In particular, $n=3$ gives
\begin{equation}
\label{eq:Irr_3_2_from_formula}
\overline{\Irr}(3,2)=\frac{6^3-2\cdot 4^3+2}{3^3-2}=\frac{90}{25}=3.6
\end{equation}
that coincides with~(\ref{eq:Irr(3,2)_handy}).

The following statement gives the estimation of $\overline{\Irr}(n,l)$.

\begin{proposition}
The number $\overline{\Irr}(n,l)$ satisfies
\[
\frac{1}{3}\stir{n}{l}^\ast\leq\overline{\Irr}(n,l)\leq \stir{n}{l}^\ast
\]
\label{pr:Irr_double_ineq}
\end{proposition}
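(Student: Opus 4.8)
The plan is to work directly from the closed form~(\ref{eq:Irr}),
\[
\overline{\Irr}(n,l)=\frac{3^n\stir{n}{l}^\ast-2S(n,l)}{3^n-2},
\]
and to reduce each of the two inequalities to an elementary estimate on the auxiliary sum $S(n,l)=\sum_{i=1}^{n-1}\binom{n}{i}\stir{n-i}{l-1}^\ast 3^{n-i}$. The crucial observation is that in the course of computing $S_2$ above we already obtained the identity
\[
\sum_{i=1}^{n-1}\binom{n}{i}\stir{n-i}{l-1}^\ast=\stir{n}{l}^\ast,
\]
whose terms are indexed by the very same set $1\leq i\leq n-1$ as those of $S(n,l)$, the two sums differing only by the weights $3^{n-i}$. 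Since $1\leq n-i\leq n-1$ for every such $i$, we have $3\leq 3^{n-i}\leq 3^{n-1}$, and a termwise comparison gives the sandwich
\[
3\stir{n}{l}^\ast\leq S(n,l)\leq 3^{n-1}\stir{n}{l}^\ast.
\]

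With this in hand, I would dispatch the upper bound first. Clearing the positive denominator $3^n-2$ (positive because $n>l\geq 1$ forces $n\geq 2$), the inequality $\overline{\Irr}(n,l)\leq\stir{n}{l}^\ast$ is seen to be equivalent to $S(n,l)\geq\stir{n}{l}^\ast$, which is immediate from the left half of the sandwich (indeed $S(n,l)\geq 3\stir{n}{l}^\ast$ is already more than enough). For the lower bound, the same clearing of denominators and collecting of the $\stir{n}{l}^\ast$-terms turns $\overline{\Irr}(n,l)\geq\frac{1}{3}\stir{n}{l}^\ast$ into the requirement $S(n,l)\leq\frac{1}{3}(3^n+1)\stir{n}{l}^\ast$; this follows at once from the right half of the sandwich, since $3^{n-1}=\frac{1}{3}3^n\leq\frac{1}{3}(3^n+1)$.

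I do not anticipate any genuine obstacle: once the two-sided bound on $S(n,l)$ is established, both halves of the proposition drop out of routine manipulation of the fraction. The only matters demanding care are bookkeeping ones — confirming $3^n-2>0$ so that multiplying through preserves the inequality directions, and checking that the summation range of the identity for $\stir{n}{l}^\ast$ matches that of $S(n,l)$ exactly, so that the weight bounds $3\leq 3^{n-i}\leq 3^{n-1}$ apply to every summand. It is perhaps worth noting the asymmetry the argument exposes: the lower bound is essentially tight (driven by the single dominant weight $3^{n-1}$), whereas the upper bound is very loose, which suggests that $\overline{\Irr}(n,l)$ in fact clusters near $\frac{1}{3}\stir{n}{l}^\ast$ as $n$ grows.
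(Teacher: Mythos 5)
Your proof is correct and is essentially the paper's own argument: the same termwise sandwich $3\stir{n}{l}^\ast\leq S(n,l)\leq 3^{n-1}\stir{n}{l}^\ast$, obtained by comparing the weights $3^{n-i}$ against the identity $\sum_{i=1}^{n-1}\binom{n}{i}\stir{n-i}{l-1}^\ast=\stir{n}{l}^\ast$ established in the computation of $S_2$, followed by routine manipulation of the fraction (the paper substitutes the bounds directly into the numerator of~(\ref{eq:Irr}) rather than clearing denominators, a purely cosmetic difference). One caveat on your closing remark: the asymmetry is in fact the opposite of what you suggest, since $2S(n,l)/(3^n-2)$ is exponentially small compared to $\stir{n}{l}^\ast$, so $\overline{\Irr}(n,l)$ clusters near the \emph{upper} bound $\stir{n}{l}^\ast$ --- for example, $\overline{\Irr}(n,2)=(6^n-2\cdot 4^n+2)/(3^n-2)\sim 2^n\sim\stir{n}{2}^\ast$ by~(\ref{eq:Irr_n_2}) --- which is exactly the asymptotic behaviour recorded in the proposition following this one.
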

\begin{proof}
%If $l=2$ we have 
%\[
%\stir{n}{2}^\ast=2^n-2,\; \overline{\Irr}(n,2)=\frac{6^n-2\cdot 4^n+2}{3^n-2}.
%\]
%The direct check gives that  
%\[
%\frac{1}{3}(2^n-2)\leq \frac{6^n-2\cdot 4^n+2}{3^n-2}\leq 2^n-2
%\]
%holds for any $n>l=2$.

One can bound $S(n,l)$ as follows
\begin{multline*}
S(n,l)\leq 3^{n-1}\sum_{i=1}^{n-1}\binom{n}{j}\stir{j}{l-1}^\ast\stackrel{(4)}{=}3^{n-1}(l-1)!\left(\stir{n+1}{l}-\stir{n}{l-1}\right)\stackrel{(3)}{=}\\
3^{n-1}(l-1)!l\stir{n}{l}=3^{n-1}\stir{n}{l}^\ast,
\end{multline*}
and similarly
\[
S(n,l)\geq 3\sum_{i=1}^{n-1}\binom{n}{j}\stir{j}{l-1}^\ast=
3\stir{n}{l}^\ast.
\]
Thus,
\[
\overline{\Irr}(n,l)\leq\frac{3^n\stir{n}{l}^\ast-2\cdot 3\stir{n}{l}^\ast}{3^n-2}=\stir{n}{l}^\ast\frac{3^n-6}{3^n-2}\leq\stir{n}{l}^\ast,
\]
and
\[
\overline{\Irr}(n,l)\geq\frac{3^n\stir{n}{l}^\ast-2\cdot 3^{n-1}\stir{n}{l}^\ast}{3^n-2}=\stir{n}{l}^\ast\frac{3^n-2\cdot 3^{n-1}}{3^n-2}\geq
\stir{n}{l}^\ast\frac{3^n-2\cdot 3^{n-1}}{3^n}=\frac{1}{3}\stir{n}{l}^\ast.
\]

\end{proof}

\begin{proposition}
For a fixed $l$ and $n\to\infty$ we have the asymptotic equivalence
\[
\overline{\Irr}(n,l)\sim l^n.
\]
\end{proposition}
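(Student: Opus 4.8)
The plan is to argue directly from the closed form~(\ref{eq:Irr}),
\[
\overline{\Irr}(n,l)=\frac{3^n\stir{n}{l}^\ast-2S(n,l)}{3^n-2},
\]
by isolating the dominant contribution. I would rewrite this as
\[
\overline{\Irr}(n,l)=\stir{n}{l}^\ast\cdot\frac{3^n}{3^n-2}-\frac{2S(n,l)}{3^n-2},
\]
so that two facts suffice: first, that $\stir{n}{l}^\ast\sim l^n$, and second, that the subtracted term is of strictly smaller exponential order than $l^n$. Since $3^n/(3^n-2)\to1$, the first summand will then be asymptotically $l^n$, and the theorem follows.

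For the first fact I would use that $\stir{n}{l}^\ast$ counts the surjections from an $n$-element set onto an $l$-element set, so that inclusion--exclusion gives
\[
\stir{n}{l}^\ast=\sum_{k=0}^{l}(-1)^k\binom{l}{k}(l-k)^n.
\]
The leading term ($k=0$) equals $l^n$, while each remaining term is bounded in absolute value by a constant (depending only on the fixed $l$) times $(l-1)^n$. Dividing through by $l^n$ and letting $n\to\infty$ yields $\stir{n}{l}^\ast/l^n\to1$, i.e.\ $\stir{n}{l}^\ast\sim l^n$.

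For the second fact I would reindex $S(n,l)$ by $j=n-i$, obtaining $S(n,l)=\sum_{j=1}^{n-1}\binom{n}{j}\stir{j}{l-1}^\ast 3^j$. Using the crude bound $\stir{j}{l-1}^\ast\le(l-1)^j$ (the number of surjections onto an $(l-1)$-set cannot exceed the number of all maps, and this also covers the cases where the surjection count is $0$), the binomial theorem gives
\[
S(n,l)\le\sum_{j=0}^{n}\binom{n}{j}\bigl(3(l-1)\bigr)^j=(3l-2)^n.
\]
Consequently $\dfrac{2S(n,l)}{3^n-2}=O\!\left(\bigl(\tfrac{3l-2}{3}\bigr)^n\right)=O\!\left((l-\tfrac23)^n\right)$, which is $o(l^n)$ because $(l-\tfrac23)/l<1$ makes the ratio $(1-\tfrac{2}{3l})^n$ tend to $0$. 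Combining the two estimates gives $\overline{\Irr}(n,l)=l^n(1+o(1))$, as claimed.

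The only mildly delicate point is controlling the tail of the surjection formula for $\stir{n}{l}^\ast$ against its leading term; the rest is a routine application of the binomial theorem together with the trivial bound on surjection counts. I expect no real obstacle, since the competing growth rates $l^n$ and $(l-\tfrac23)^n$ differ exponentially, leaving ample room in the estimate.
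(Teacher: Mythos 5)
Your proof is correct, and it takes a genuinely different route from the paper's. The paper deduces the result from Proposition~\ref{pr:Irr_double_ineq}: it quotes the explicit formula for $\stir{n}{l}$ and then writes $\overline{\Irr}(n,l)\sim\stir{n}{l}^\ast=l!\stir{n}{l}\sim l!\,l^n\sim l^n$, with the first equivalence justified only by the sandwich $\tfrac{1}{3}\stir{n}{l}^\ast\le\overline{\Irr}(n,l)\le\stir{n}{l}^\ast$. You instead work directly from the closed form~(\ref{eq:Irr}), proving $\stir{n}{l}^\ast\sim l^n$ by the inclusion--exclusion (surjection) formula, and showing that the subtracted term $2S(n,l)/(3^n-2)$ is $O\bigl((l-\tfrac{2}{3})^n\bigr)=o(l^n)$ via the crude bound $\stir{j}{l-1}^\ast\le(l-1)^j$ and the binomial theorem. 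Your route is slightly longer but strictly more rigorous: a two-sided bound with constants $\tfrac{1}{3}$ and $1$ yields only $\overline{\Irr}(n,l)=\Theta\bigl(\stir{n}{l}^\ast\bigr)$, not asymptotic equivalence (ratio tending to $1$), so the paper's appeal to Proposition~\ref{pr:Irr_double_ineq} does not by itself justify the stated $\sim$; likewise the paper's chain contains the slips $\stir{n}{l}\sim l^n$ and $l!\,l^n\sim l^n$ (the correct statement is $\stir{n}{l}\sim l^n/l!$, whence $\stir{n}{l}^\ast\sim l^n$). Your direct estimate of both terms in~(\ref{eq:Irr}) closes exactly this gap and pins down the constant $1$, so your argument is self-contained and in fact repairs the weakness in the paper's own proof.
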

\begin{proof}
Using the following explicit formula for Stirling numbers
\[
\stir{n}{l}=\frac{1}{l!}\sum_{j=0}^l(-1)^{l-j}\binom{l}{j}j^n,
\]
we obtain $\stir{n}{l}\sim l^n$ for fixed $l$ and $n\to\infty$.
By Proposition~\ref{pr:Irr_double_ineq}, we have 
\[
\overline{\Irr}(n,l)\sim \stir{n}{l}^\ast=l!\stir{n}{l}\sim l!l^n\sim l^n.
\] 
\end{proof}

The information of the author:

Artem N. Shevlyakov

Sobolev Institute of Mathematics

644099 Russia, Omsk, Pevtsova st. 13

Phone: +7-3812-23-25-51.

e-mail: \texttt{a\_shevl@mail.ru}

\begin{thebibliography}{10}
\bibitem{uni_Th_II}
E. Yu. Daniyarova, A. G. Myasnikov, V. N. Remeslennikov, Algebraic geometry over algebraic structures. II. Foundations, J. Math. Sci., 185:3 (2012), 389--416.

\bibitem{ben-or}
M. Ben-Or, Lower bounds for algebraic computation trees, Proc. 15th Annual Symposium on Theory of Computing (1983), 80--86.

\bibitem{malov}
M. V. Malov, On irreducible algebraic sets over infinite linearly ordered semilattices, to appear.
\end{thebibliography}
\end{document}